\newtheorem{thm}{Theorem}[section]
\newtheorem{cor}[thm]{Corollary}
\theoremstyle{definition}
\theoremstyle{remark}
\newtheorem{remark}[thm]{Remark}
\newtheorem{notation}[thm]{Notation}
\newlength{\intwidth}
\DeclareRobustCommand{\fpint}[2]
   {\mathop{%
      \text{%
              \settowidth{\intwidth}{$\int$}%
              \makebox[0pt][l]{\makebox[\intwidth]{$-$}}%
              $\int_{#1}^{#2}$
           }
           }
   }
\title[Degenerating behavior of Green's function]{Degenerating behavior of Green's function}
\author[F. Peherstorfer]{F. Peherstorfer$^1$}
\thanks{$^1$The manuscript was prepared by the author in the two months preceding his passing away 
in November 2009. The manuscript remained unsubmitted and is not published elsewhere 
(submitted by P. Yuditskii and I. Moale).}
\begin{document}

\begin{abstract}
Let the unions of real intervals $I = \cup_{j = 1}^l [a_{2 j -1},a_{2j}],$
$a_1 < ... < a_{2 l},$ and $I_n = \cup_{k = 1}^m [B_{k,n}, C_{k,n}]$ be such
that $\cap_{k = 1}^{\infty} [B_{k,n},C_{k,n}] = \{ c_k \}$ for $k = 1,...,m$
and ${\rm dist}(E,I_n) \geq const > 0.$ We show how to express asymptotically
the Green's function $\phi(z,\infty,E \cup I_n)$ of $E \cup I_n$ at $z = \infty$
in terms of the Green's function $\phi(z,\infty,E)$ and $\phi(z,c_k,E).$ The
formula yields immediately asymptotics for $\phi^n(z,\infty,E \cup I_n)$
with respect to $n$ which are important in many problems of approximation
theory. Another consequence is an asymptotic representation of $cap(E \cup I_n)$
in terms of $cap(E)$ and $\phi(z,c_k,E)$ and of the harmonic measure
$\omega(\infty, E_j,E \cup I_n).$
\end{abstract}

\maketitle

Let $E = \bigcup_{j=1}^{l} [a_{2j-1},a_{2j}],$ $-\infty < a_1 <
a_2 < ... < a_{2l} < \infty,$ be a union of $l$ disjoint
intervals, put $H(x) = \prod_{j=1}^{2l} (x - a_j)$ and let
$\phi(z,z_0,E)$ be a so-called complex Green's function, that is,
a mapping which maps $\bar{\mathbb C} \backslash E$ onto the
exterior of the unit circle, which has a simple pole at $z = z_0
\in \bar{\mathbb C} \backslash E$ and satisfies $|\phi(z,z_0)| \to
1$ for $z \to x \in E$; or in other words $\log |\phi|$ is
the Green's function. It is known, see e.g. \cite{Wid}, that
\begin{equation*}
     \phi(z,\infty,E) := \phi(z,\infty) =
     \exp \left( \int_{a_{2l}}^{z} r_{\infty}(\xi) \frac{d\xi}{\sqrt{H(\xi)}} \right)
\end{equation*}
where $r_{\infty}(\xi) = \xi^{l-1} + ...$ is the unique
polynomial such that
\begin{equation}\label{t1}
    \int_{a_{2j}}^{a_{2j+1}} r_{\infty}(\xi) \frac{d\xi}{\sqrt{H(\xi)}} =
    0 {\rm \ for \ } j = 0, ..., l-1
\end{equation}
and that for $x \in {\mathbb R} \backslash E$
\begin{equation}
   \phi(z,x_0,E) := \phi(z,x_0) =
   \exp \left( \int_{a_{2l}}^{z}
   \frac{r_{x_0}(\xi)}{\xi - x_0} \frac{d\xi}{\sqrt{H(\xi)}} \right)
\end{equation}
where $r_{x_0} \in {\mathbb P}_{l-1}$ is such that
\begin{equation}\label{t2}
   r_{x_0}(x_0) = - \sqrt{H(x_0)}
\end{equation}
and
\begin{equation}\label{t3}
   \fpint{a_{2j}}{a_{2j+1}} \frac{r_{x_0}(\xi)}{\xi - x_0}
   \frac{d\xi}{\sqrt{H(\xi)}} = 0 {\rm \ for \ } j = 1, ... , l-1.
\end{equation}
The so-called capacity of $E$ is given by
\begin{equation}\label{t4}
     cap (E) = \lim\limits_{z \to \infty} |\frac{z}{\phi(z,\infty,E)}|
\end{equation}
By $\omega(z,K,E)$ we denote the harmonic measure of the set
$K \subset E$ with respect to $E.$ Recall that
\begin{equation}\label{t5}
    \omega(\infty,[a_{2 j - 1},a_{2 j}],E) =
    \int_{a_{2 j - 1}}^{a_{2 j}} |r_\infty(x)| \frac{dx}{\sqrt{|H(x)|}}
\end{equation}

\begin{thm}\label{first}
(Asymptotic representation of the Green's function in the
degenerate case) Let $l,m \in \mathbb N,$
$E = \bigcup_{j=1}^{l} [a_{2j-1},a_{2j}]$ and for a strictly monotone
increasing subsequence $(n)$ of $\mathbb N$ let
$I_n = \bigcup_{k=1}^{m} [B_{k,n}, C_{k,n}]$ with
$dist(E, I_n) \geq const > 0$  and $\bigcap_{n=1}^{\infty}
[B_{k,n}, C_{k,n}] = \{ c_k \}$ and let $\omega_{k,n} =$
\\$\omega(\infty, [B_{k,n},C_{k,n}],E \cup I_n)$ with
$\omega_{k,n} \underset {n \to \infty} \longrightarrow 0$ for $k =
1,...,m.$ Then uniformly on compact subsets of ${\mathbb C}
\backslash ( E \cup \{ c_1,...,c_m \})$
\begin{equation}\label{last}
   \phi (z, \infty, E \cup I_n) =
   \frac{\phi (z, \infty, E)}
   {\prod\limits_{k=1}^{m}(\phi (z, c_k, E))^{\omega_{k,n}}}
   (1 + O(\max\limits_{1 \leq k \leq m} \omega_{k,n}^2))
\end{equation}
\end{thm}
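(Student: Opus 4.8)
The plan is to pass to logarithms and prove the statement first for the moduli, recovering the full (complex, multivalued) identity at the end. Write $\omega:=\max_{1\le k\le m}\omega_{k,n}$, and set $g_n(z)=\log|\phi(z,\infty,E\cup I_n)|$, $g(z)=\log|\phi(z,\infty,E)|$, $h_k(z)=\log|\phi(z,c_k,E)|$ for the corresponding Green's functions. Then \eqref{last} is equivalent to the additive estimate
\[
 u_n(z):=g_n(z)-g(z)+\sum_{k=1}^{m}\omega_{k,n}\,h_k(z)=O(\omega^2)
\]
uniformly on compacta of $\Cb\setminus(E\cup\{c_1,\dots,c_m\})$, together with a matching of arguments needed to upgrade the modulus estimate to the complex ratio in \eqref{last}. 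I would treat $u_n$ first.

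First I would derive the formula itself by potential theory. Let $\mu_n$ and $\mu$ be the equilibrium (Robin) measures of $E\cup I_n$ and of $E$; then $g_n(z)=\int\log|z-t|\,d\mu_n(t)-\log cap(E\cup I_n)$, and by \eqref{t5} the mass $\mu_n([B_{k,n},C_{k,n}])$ equals $\omega_{k,n}$. Since $[B_{k,n},C_{k,n}]\downarrow\{c_k\}$, the restriction $\mu_n|_{I_n}$ is, in potential and off the $c_k$, within $O(\omega_{k,n}|C_{k,n}-B_{k,n}|)$ of $\sum_k\omega_{k,n}\delta_{c_k}$. Imposing Frostman's equality $\int\log|z-t|\,d\mu_n(t)=\log cap(E\cup I_n)$ on $E$ and solving the resulting potential equation yields $\mu_n|_E=\mu-\sum_k\omega_{k,n}\sigma_k+(\text{error})$, where $\sigma_k$ is the balayage of $\delta_{c_k}$ onto $E$, characterised by $h_k(z)=\int_E\log|z-t|\,d\sigma_k(t)-\log|z-c_k|+h_k(\infty)$. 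Comparing the additive constants in these potential identities produces, as a by-product, the capacity asymptotics $\log cap(E\cup I_n)=\log cap(E)+\sum_k\omega_{k,n}h_k(\infty)+(\text{error})$. Substituting $\mu_n|_E$, the point-mass approximation of $\mu_n|_{I_n}$, and this capacity relation into the representation of $g_n$, and using the potential representations of $g$ and of the $h_k$, all singularities $\log|z-c_k|$ and all additive constants cancel, leaving exactly $g_n=g-\sum_k\omega_{k,n}h_k+(\text{error})$.

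Next I would show the error is $O(\omega^2)$. The only approximation above is the replacement of $\mu_n|_{I_n}$ by point masses, and Frostman's equality imposed on the $k$-th interval forces the balance $\omega_{k,n}\log\frac{1}{|C_{k,n}-B_{k,n}|}\sim h_k(\infty)$; hence the lengths $|C_{k,n}-B_{k,n}|$ are super-exponentially small in $1/\omega_{k,n}$ and every such error is $o(\omega^2)$. A more robust route to the same bound, and the one I would ultimately write, is the maximum principle: $u_n$ is harmonic in $\Cb\setminus(E\cup I_n)$, vanishes identically on $E$, behaves like $-\omega_{k,n}\log|z-c_k|\to+\infty$ at the boundary points $c_k$, and on $[B_{k,n},C_{k,n}]\setminus\{c_k\}$ equals $-g+\sum_j\omega_{j,n}h_j$, which the Frostman balance on that interval shows to be $O(\omega^2)$ at the endpoints. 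Excising small disks about the $c_k$ and estimating $u_n$ on their boundaries then bounds $u_n$ from above and below by $O(\omega^2)$.

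Finally I would recover the complex identity by forming $F_n(z)=\phi(z,\infty,E\cup I_n)\big(\prod_k\phi(z,c_k,E)^{\omega_{k,n}}\big)/\phi(z,\infty,E)$. The reason the harmonic measures are the correct weights is that they make the monodromies cancel: around the component of $I_n$ collapsing to $c_k$ the increment $2\pi\omega_{k,n}$ of $\arg\phi(z,\infty,E\cup I_n)$ is cancelled exactly by the $-2\pi\omega_{k,n}$ coming from the simple pole of $\phi(\cdot,c_k,E)^{\omega_{k,n}}$, while around each component of $E$ the increments cancel up to $O(\omega^2)$ because the harmonic measures $\omega(\infty,E_j,E\cup I_n)$ themselves obey the analogous first-order asymptotics. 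Thus $F_n$ is single-valued analytic up to $O(\omega^2)$, has $\log|F_n|=u_n=O(\omega^2)$, and a normal-families/Cauchy estimate gives $F_n=1+O(\omega^2)$, i.e. \eqref{last}. The hard part will be the uniform second-order control up to the degenerating intervals—namely the boundary estimate of $u_n$ on $I_n$ and near the points $c_k$ through the Frostman balance, and making the monodromy cancellation on the components of $E$ precise to order $\omega^2$; everything else is bookkeeping with the potential-theoretic identities already recalled in the introduction.
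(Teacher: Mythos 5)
Your core route is genuinely different from the paper's, and its main line can be made to work. The paper never touches equilibrium measures: it writes $\phi(z,\infty,E\cup I_n)$ through the explicit Widom representation $\exp\bigl(\int r_{\tilde{E},n}(\xi)\,d\xi/\sqrt{H_{\tilde{E},n}(\xi)}\bigr)$, splits the polynomial as $r_{\tilde{E},n}=r_E\prod_k(\cdot-c_k)+s_n\prod_k(\cdot-c_k)+t_n$, identifies the partial--fraction coefficients of $t_n$ as $\omega_{k,n}\sqrt{H(c_k)}+O(L_n)$ by contour integration around each collapsing interval ($L_n$ the largest length), pins down $s_n$ from the gap conditions and Cramer's rule, and closes with the elementary comparison $\omega_{k,n}\geq const\sqrt{C_{k,n}-B_{k,n}}$, i.e. $L_n=O(\max_k\omega_{k,n}^2)$. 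Your replacement is potential--theoretic: the sweeping-out identity $\mu_E=\mu_n|_E+\sum_k\widehat{\nu}_k$ (where $\nu_k=\mu_n|_{[B_{k,n},C_{k,n}]}$ and $\widehat{\nu}_k$ is its balayage onto $E$) is exact, the replacement $\nu_k\approx\omega_{k,n}\delta_{c_k}$ costs only $O(\omega_{k,n}\delta_k)$, $\delta_k:=C_{k,n}-B_{k,n}$, in all potentials on compacta, and the Frostman balance $\omega_{k,n}\log(1/\delta_k)\to h_k(\infty)$ is correct and makes $\delta_k$ exponentially small in $1/\omega_{k,n}$ --- strictly stronger than the paper's quadratic bound on $L_n$; this route also yields the capacity and harmonic-measure corollaries as by-products and transplants to the more general sets of the paper's Remark. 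Carried out this way, the direct computation gives $u_n=O(\omega L_n)=O(\omega^2)$ with no maximum principle at all.

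There are, however, two genuine gaps in how you propose to finish. First, the maximum-principle variant that you say you would ``ultimately write'' does not close. On the excised circles around $c_k$ the function $u_n$ contains the unknown $g_n$ itself, so estimating it there is circular; and on $I_{k,n}$ the boundary values of $u_n$ are \emph{not} $O(\omega^2)$: at an endpoint $x$, $u_n(x)=-g(x)+\omega_{k,n}h_k(x)+\sum_{j\neq k}\omega_{j,n}h_j(x)$, and after the leading cancellation $-g(c_k)+\omega_{k,n}\log(1/\delta_k)\approx 0$ provided by the Frostman balance there remain the first-order terms $\omega_{k,n}\gamma_k+\sum_{j\neq k}\omega_{j,n}h_j(c_k)$ (with $\gamma_k$ the constant term of $h_k$ at $c_k$) together with the error of the balance itself. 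So a straight maximum principle can only deliver $u_n=O(\omega)$; recovering $O(\omega^2)$ requires weighting these $O(\omega)$ boundary data by the harmonic measure of $I_{k,n}$, which is itself $O(\omega)$, via a representation formula --- at which point you are redoing the direct balayage computation. Drop this variant and keep route one. Second, in the monodromy step you justify the cancellation of periods around the components $E_j$ by the ``analogous first-order asymptotics'' of $\omega(\infty,E_j,E\cup I_n)$; as stated, that is the paper's Corollary b), which is \emph{deduced from} the theorem, so the argument is circular. The repair is available inside your own framework: the balayage identity gives, independently of the theorem, $\omega(\infty,E_j,E\cup I_n)=\mu_n(E_j)=\mu_E(E_j)-\sum_k\omega_{k,n}\omega(c_k,E_j,E)+O(\omega L_n)$, which is exactly the period cancellation needed; after that, a Borel--Carath\'eodory or gradient estimate (normalizing the argument on the real axis to the right of all the sets, where every factor is positive) upgrades $\log|F_n|=O(\omega^2)$ to $F_n=1+O(\omega^2)$, i.e. to \eqref{last}.
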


\begin{proof}
For abbreviation we put
\begin{equation*}
   \tilde{E}_n = E \cup I_n, \ H_{\tilde{E},n}(x) = H(x) \tilde{H}_n(x)
\end{equation*}
where
\begin{equation*}
   H(x) = \prod_{j=1}^{2l} (x-a_j), \ \tilde{H}_n(x) = \prod_{k=1}^{m} (x-B_{k,n})(x-C_{k,n})
\end{equation*}
By \cite{Wid} we know that  there exists a monic polynomial $r_{\tilde{E},n}(x)$
of degree $l+m-1$ such that
\begin{equation*}
   \phi(z, \infty, \tilde{E}_n) =
   \exp \left( \int \frac{n r_{\tilde{E},n}(x)}{\sqrt{H_{\tilde{E},n}(x)}} dx \right)
\end{equation*}
Now let us represent $r_{\tilde{E},n}$ in the form
\begin{equation*}
\begin{aligned}
    r_{\tilde{E},n}(x)
    & = \tilde{s}_n(x) \prod_{k=1}^{m} (x-c_k) + t_n(x)\\
    & = r_{E}(x) \prod_{k=1}^{m} (x-c_k) + s_n(x) \prod_{k=1}^{m} (x-c_k) + t_n(x) \\
\end{aligned}
\end{equation*}
where $r_E \in {\mathbb P}_{l-1}$ is the monic polynomial
associated with the Green's function $\phi(z, \infty, E)$,
$s_n(x) = \tilde{s}_n(x) - r_E(x) \in {\mathbb P}_{l-2}$
and $t_n \in {\mathbb P}_{m-1}.$ Putting
\begin{equation*}
    L_n = \max_{1 \leq k \leq n} |C_{k,n} - B_{k,n}|
\end{equation*}
we have on compact subsets of ${\mathbb C} \backslash \{ c_1,...,c_m \}$
\begin{equation*}
    \frac{1}{\sqrt{H_{\tilde{E},n}(x)}} =
    \frac{1}{\prod\limits_{k=1}^{m} (x - c_k) \sqrt{H(x)}} + O(L_n)
\end{equation*}
and thus
\begin{equation}\label{tilde}
\begin{aligned}
   \frac {r_{\tilde{E},n}(x)} {\sqrt{H_{\tilde{E},n}(x)}} dx =
     & \frac{r_{E}(x)}{\sqrt{H(x)}} dx +
 \frac{s_{n}(x)}{\sqrt{H(x)}}dx \\
     & \ + \frac{t_n(x)}{\prod\limits_{k=1}^{m}(x-c_k)}
 \frac{dx}{\sqrt{H(x)}} + O(L_n)
\end{aligned}
\end{equation}

Partial fraction expansion gives, recall $t_n \in {\mathbb P}_{m-1},$
\begin{equation}\label{xto1}
    \frac{t_n(x)}{\prod\limits_{k=1}^{m} (x-c_k)} =
    \sum\limits_{k=1}^{m} \frac{\lambda_{k,n}}{x-c_k}
\end{equation}
To determine the $\lambda_{k,n}$'s we integrate both sides in
\eqref{tilde} counterclockwise around a circle with center $c_k$
and fixed small radius $\varepsilon.$ Taking into consideration
the facts, that the limiting values
$\pm \sqrt{H_{\tilde{E},n}(x)} := \lim\limits_{\substack{z \to x^{\pm} \\ x \in \tilde{E},n}} \frac{1}{\sqrt{H(z)}}$
from the upper- and lower half plane satisfy $+ \sqrt{H_{\tilde{E},n}(x)} = -
\sqrt{H_{\tilde{E},n}(x)}$ and that
\begin{equation*}
   + \frac{1}{\sqrt{H_{\tilde{E},n}(x)}} =
   \frac{sgn \ r_{\tilde{E},n}(x)}{i \sqrt{|H_{\tilde{E},n}(x)|}}
   {\rm \ \ \ \ \ for \ } x \in \tilde{E},n,
\end{equation*}
we obtain for the LHS by shrinking the circle
to the interval $[B_{k,n},C_{k,n}]$ that
\begin{equation}
   \oint \frac{ r_{\tilde{E},n}(\xi) }{ \sqrt{H_{\tilde{E},n}(\xi)} } d\xi =
   \frac{2}{i} \int_{C_{k,n}}^{B_{k,n}}
   \frac{|r_{\tilde{E},n}(x)|}{ \sqrt{|H_{\tilde{E},n}(x)|} } dx =
   2 i \pi \omega_{k,n}
\end{equation}
On the other hand the RHS from \eqref{tilde} becomes, note that
all the terms at the RHS are analytic on the circle up to
$1/(x-c_k),$
\begin{equation*}
\begin{aligned}
      \oint \frac{ r_{\tilde{E},n}(x) }{ \sqrt{H_{\tilde{E},n}(x)} } dx
            & = \lambda_{k,n} \oint \frac{ 1 }{ x - c_k } \frac{ dx }{ \sqrt{H (x)} } + O(L_n)\\
            & = \frac{ \lambda_{k,n} 2 \pi i }{ \sqrt{H(c_k)} } + O(L_n)
\end{aligned}
\end{equation*}
that is,
\begin{equation}\label{xla2}
      \lambda_{k,n} = \omega_{k,n} \sqrt{H(c_k)} + O(L_n)
\end{equation}

Summarizing, on compact subsets of ${\mathbb C} \backslash \{ c_1,...,c_m \}$ we have
\begin{equation}\label{tilde0}
\begin{aligned}
   \frac {r_{\tilde{E},n}(x)} {\sqrt{H_{\tilde{E},n}(x)}} dx =
       & \frac{r_E(x)}{\sqrt{H(x)}} dx + s_n(x)
   \frac{dx}{\sqrt{H(x)}} \\
       & + \sum \omega_{k,n} \frac{\sqrt{H(c_k)}}{x-c_k} \frac{dx}{\sqrt{H(x)}} + O(L_n)
\end{aligned}
\end{equation}

Finally let us determine $s_n(x)$ asymptotically. We have by
\eqref{tilde}, \eqref{xto1} and \eqref{xla2} that
\begin{equation*}
\begin{aligned}
    -i \pi \omega_{\nu,n} = \int_{B_{\nu,n}}^{C_{\nu,n}}
    \frac{r_{\tilde{E},n}(x)}{ \sqrt{H_{\tilde{E},n}(x)} } dx
    = \int_{a_{2\nu}}^{a_{2\nu+1}} \frac{r_{\tilde{E},n}(x)}{ \sqrt{H_{\tilde{E},n}(x)} } dx\\
    = \int_{a_{2\nu}}^{a_{2\nu+1}} \left( s_n(x) + \sum_{k=1}^m
    \frac{\omega_{k,n} \sqrt{H(c_k)}}{ x - c_k} \right) \frac{dx}{\sqrt{H(x)}} + O(L_n)
\end{aligned}
\end{equation*}
Observing that
\begin{equation*}
\begin{aligned}
    \fpint{a_{2\nu}}{a_{2 \nu + 1}} \frac{r_{c_k}(\xi)}{x - c_k} \frac{d \xi}{\sqrt{H(\xi)}} =
    & \int_{a_{2 \nu}}^{a_{2 \nu + 1}}
      \frac{r_{c_k}(x) - r_{c_k}(c_k)}{x - c_k} \frac{dx}{\sqrt{H(x)}} \\
    & - \sqrt{H(c_k)} \fpint{a_{2 \nu}}{a_{2 \nu + 1}}
      \frac{1}{\xi - c_k} \frac{d \xi}{\sqrt{H(\xi)}}
\end{aligned}
\end{equation*}
and that by \eqref{t2}
\begin{equation}
   \fpint{a_{2 \nu}}{a_{2 \nu + 1}} \frac{1}{\xi - c_k}
   \frac{d \xi}{\sqrt{H(\xi)}}= i \pi \delta_{k \nu}
\end{equation}
it follows that for $\nu = 1, ... , l-1 $
\begin{equation}\label{4prim}
    \fpint{a_{2 \nu}}{a_{2 \nu+1}} \left( s_{n}(\xi) -
    \sum_{k=1}^{m} \omega_{k,n} q_{c_{k}}(\xi) \frac{d\xi}{\sqrt{H(\xi)}} \right) d \xi = O(L_n)
\end{equation}
Recall that $s_n(\xi) - \sum_{k = 1}^m \omega_{k,n} q_{c_k}(\xi) \in {\mathbb P}_{l - 2}.$
Since a polynomial $q \in {\mathbb P}_{l-2}$ satisfying
\begin{equation}\label{ex}
    \int_{a_{2k}}^{a_{2k+1}} q(x) \frac{dx}{\sqrt{H(x)}} = 0 \ {\rm for} \ k = 1,...,l-1
\end{equation}
is identically the zero polynomial it follows with the help of Cramer's rule that
\begin{equation*}
    s_n(x) - \sum_{k=1}^{m} \omega_{k,n} q_{c_{k,n}}(x) = O(L_n)
\end{equation*}
which gives by \eqref{tilde0} the assertion, if we are able to show that
\begin{equation}\label{o}
    L_n = O( \max\limits_{1 \leq k \leq n} \omega_{k,n}^2)
\end{equation}
If $c_k \in (a_1,a_{2l})$ let us consider
\begin{equation*}
    {\mathcal E}_{k,n} = [a_1, C_{k,n}] \cup [a_{2k+1},a_{2l}],
    \ {\rm respectively,} \
    {\mathcal E}_{k,\infty} = [a_1, c_{k}] \cup [a_{2k+1},a_{2l}]
\end{equation*}
and let $A_{k,n}$ and $A_{k,\infty}$ be the critical point of
$\ln \phi(x, \infty, {\mathcal E}_{k,n})$ and
$\ln \phi(x, \infty, {\mathcal E}_{k,\infty})$, respectively. Then it follows that
\begin{equation*}
    A_{k,n} \to A_{k, \infty} \in (c_k, a_{2k+1}).
\end{equation*}
Thus we obtain for $n \geq n_0$
\begin{equation*}
\begin{aligned}
   & \omega(\infty, [B_{k,n}, C_{k,n}], \tilde{E}_n) \geq
     \omega(\infty, [B_{k,n}, C_{k,n}], {\mathcal E}_{k,n})\\
   & = \int_{B_{k,n}}^{C_{k,n}}
     \frac{x - A_{k,n}}{\sqrt{(x - a_1)(x - a_{2l})(x - C_{k,n})(x - a_{2k + 1})}} dx \\
   & \geq const \int_{B_{k,n}}^{C_{k,n}}
     \frac{dx}{\sqrt{C_{k,n}-x}} = const \sqrt{C_{k,n} - B_{k,n}},
\end{aligned}
\end{equation*}
where $const > 0$ is independent of $n,$ and this proves \eqref{o}.

If $c_k \in {\mathbb R} \backslash [a_1, a_{2l}]$ then we consider
${\mathcal E}_{k,n} := [a_1, C_{k,n}] \supset \tilde{E}_n,$
respectively, $[B_{k,n}, a_{2l}]$ and the assertion follows even simpler as above.
\end{proof}

\begin{remark}
By conformal transplantation and the invariance property of the harmonic measure
it follows that the Theorem holds for much more general sets as circular slits
in the plane, etc. .
\end{remark}

\begin{cor}
As above let $\omega_{k,n} = \omega(\infty, [B_{k,n},C_{k,n}],
E \cup \bigcup_{k=1}^{m} [B_{k,n},C_{k,n}]).$
Then the following statements hold:
\begin{enumerate}
\item[a)] $ cap \ (E \cup \bigcup_{k=1}^{m} [B_{k,n},C_{k,n}])$
          $ = ( cap \ E ) \prod\limits_{k=1}^{m} |\phi(\infty, c_k, E)|^{\omega_{k,n}} ( cap \ E )
            (1 + O(\max \limits_{k} \omega_{k,n}^2 ))$
\item[b)] $ \omega (\infty, E_j, E \cup \bigcup_{k=1}^{m} [B_{k,n},C_{k,n}]) $
          $ = \omega (\infty, E_j, E) - \sum\limits_{k=1}^{m} \omega_{k,n} \omega(c_k, E_j, E)
            + O(\max \limits_{k} \omega_{k,n}^2) $
\end{enumerate}
\end{cor}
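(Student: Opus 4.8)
The plan is to deduce both statements directly from Theorem~\ref{first}, feeding the asymptotic formula \eqref{last} into the integral representations \eqref{t4} and \eqref{t5}; the hard analysis is already contained in the theorem, so the work is to set up the two representations and track the $O(\max_k\omega_{k,n}^2)$ remainder. Throughout I write $\tilde E_n = E\cup\bigcup_k[B_{k,n},C_{k,n}]$ and use that, since $\mathrm{dist}(E,I_n)\ge\mathrm{const}>0$ and the $c_k$ lie in the gaps of $E$, each fixed band $E_j=[a_{2j-1},a_{2j}]$ stays at positive distance from $\{c_1,\dots,c_m\}$, so the estimate in \eqref{last} applies uniformly on a neighbourhood of $E_j$.

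For a) I would start from $\phi(z,\infty,F)\sim z/cap(F)$ as $z\to\infty$, which is just a restatement of \eqref{t4}. Forming the quotient and using \eqref{last},
$$\frac{\phi(z,\infty,\tilde E_n)}{\phi(z,\infty,E)} = \frac{1}{\prod_{k=1}^m\phi(z,c_k,E)^{\omega_{k,n}}}\bigl(1+O(\max_k\omega_{k,n}^2)\bigr),$$
and letting $z\to\infty$ the left side tends to $cap(E)/cap(\tilde E_n)$, while on the right $\phi(z,c_k,E)\to\phi(\infty,c_k,E)$, a finite nonzero value because the pole of $\phi(\cdot,c_k,E)$ sits at $c_k\ne\infty$. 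Solving for $cap(\tilde E_n)$ and taking absolute values (the capacity being real and positive) yields
$$cap(\tilde E_n) = cap(E)\prod_{k=1}^m|\phi(\infty,c_k,E)|^{\omega_{k,n}}\bigl(1+O(\max_k\omega_{k,n}^2)\bigr).$$

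For b) I would express the harmonic measure of the band $E_j$ with respect to $\tilde E_n$ as a period of the generating differential, in the spirit of \eqref{t5}: up to the sign fixed by orientation and the normalizing factor $1/(i\pi)$ appearing in the proof of Theorem~\ref{first},
$$\omega(\infty,E_j,\tilde E_n) = \frac{1}{i\pi}\int_{a_{2j-1}}^{a_{2j}}\frac{r_{\tilde E,n}(x)}{\sqrt{H_{\tilde E,n}(x)}}\,dx.$$
Into this I insert the differential identity underlying \eqref{last} that was established inside that proof, namely $r_{\tilde E,n}/\sqrt{H_{\tilde E,n}}\,dx = r_E/\sqrt H\,dx - \sum_k\omega_{k,n}\,r_{c_k}/((x-c_k)\sqrt H)\,dx + O(\max_k\omega_{k,n}^2)$, valid uniformly near $E_j$. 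The leading term integrates, by \eqref{t5}, to $\omega(\infty,E_j,E)$, while the $k$-th correction integrates to $\omega_{k,n}$ times the pole-at-$c_k$ analogue of \eqref{t5}, which is precisely $\omega(c_k,E_j,E)$. Collecting terms gives $\omega(\infty,E_j,\tilde E_n)=\omega(\infty,E_j,E)-\sum_k\omega_{k,n}\,\omega(c_k,E_j,E)+O(\max_k\omega_{k,n}^2)$.

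The two points needing genuine care—and which I expect to be the main obstacles—are the passages to the relevant limits within the error term, since \eqref{last} is only asserted uniformly on compact subsets of $\mathbb C\setminus(E\cup\{c_k\})$. In a) this set excludes $\infty$, so to license the limit I would note that the quotient $\phi(\cdot,\infty,\tilde E_n)\prod_k\phi(\cdot,c_k,E)^{\omega_{k,n}}/\phi(\cdot,\infty,E)$ extends analytically and without zeros to a neighbourhood of $\infty$ (the poles at $\infty$ of numerator and denominator cancel, the factors $\phi(\cdot,c_k,E)$ are regular there, and on a disk about $\infty$ the non-integer powers admit single-valued branches), so the bound on a large circle $|z|=R$ propagates to $z=\infty$ by the maximum principle applied to the quotient and its reciprocal. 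In b) the delicate point is the clean identification of the cross term with $\omega(c_k,E_j,E)$, which requires the harmonic-measure representation for a pole at the finite point $c_k$ rather than at $\infty$; granting that analogue of \eqref{t5}, the $O(\max_k\omega_{k,n}^2)$ survives integration because $E_j$ has fixed finite length and stays at positive distance from the $c_k$, so the uniform estimate may be integrated term by term.
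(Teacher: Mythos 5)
Your proposal is correct and takes essentially the paper's own route: part a) is the capacity normalization \eqref{t4} applied to the quotient formed from \eqref{last} (your maximum-principle argument at $z=\infty$ merely fills in a detail the paper's one-line proof leaves implicit), and part b) is exactly the paper's ``take the log of \eqref{last} and use \eqref{t5}'' carried out at the level of the generating differential, with the pole-at-$c_k$ analogue of \eqref{t5} identifying the cross term as $\omega(c_k,E_j,E)$. As a side benefit, your derivation of a) confirms that the repeated factor $(cap \ E)$ in the stated formula is a typo: the capacity of $E$ should appear exactly once.
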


\begin{proof}
a) Follows immediately by the fact that for a compact set $K$ \\
$\lim\limits_{z \to \infty} |\frac{z}{\phi(z, \infty, K)}| = cap \ (K)$\\
b) follows by taking the $\log$ in \eqref{last} and using \eqref{t5}.
\end{proof}

\begin{notation}
a) Let $p_n(x) = a_n x^n + ...;$ by $lc(p_n) = a_n$ we denote the leading coefficient
of $p_n.$

b) $x_1,...,x_m \in E$ with $x_1 < x_2 < ... < x_m$ are called alternation points
of $f \in C[a,b]$ if $f(x_i) = \pm (-1)^i ||f||_{\infty, E}$ for $i = 1,...,m.$
By $\sharp A(f;K)$ we denote the number of alternation points of $f$ on $K.$
\end{notation}

\begin{thm}\label{four}
Let $l,m \in \mathbb N,$ $E = \bigcup_{j=1}^{l} [a_{2j-1},a_{2j}]$
and $I_n = \bigcup_{k=1}^{m} [B_{k,n}, C_{k,n}]$ with $dist(E,
I_n) \geq const > 0$ for a strictly monotone increasing
subsequence of $\mathbb N$ and $\bigcap_{k=1}^{\infty} [B_{k,n},
C_{k,n}] = \{ c_k \}.$

Suppose that there is a sequence of polynomials $(p_n)$ such that
$p_n \in {\mathbb P}_n$ has $n$ real zeros, $\min \{ |p_n(y_i)| :
p'_n(y_i) = 0 \} \geq 1,$ $||p_n||_{E \cup I_n} \leq 1$ and
$\sharp A(p_n ; I_{k,n}) = \nu_k + 1$ for $k = 1, ..., m$ and $n \in
\mathbb N.$ Then on compact subsets of
${\mathbb C} \backslash (E \cup \{ c_1,...,c_m \})$
$p_n$ has an asymptotic representation of the form
\begin{equation}\label{alt1}
    2 p_n(x) = \left( \psi_n(x) + \frac{1}{\psi_n(x)} \right) (1 + o(1))
\end{equation}
where
\begin{equation*}
    \psi_n(x) = \frac{\phi^n(x, \infty, E)}{\prod\limits_{k=1}^{m} \phi^{\nu_k}(x,c_k,E)}
\end{equation*}
Furthermore the norm of the monic polynomial $\hat{p}_n(x) = p_n(x) /lc(p_n)$
is given by
\begin{equation}\label{alt2}
   ||\hat{p}_n||_E = 1/lc(p_n) = 2 (cap \ E)^n \prod_{j = 1}^m \phi^{\nu_k}(c_j;\infty,E)(1+o(1))
\end{equation}
where $0 < r < 1,$ and concerning the alternation points on $E_\nu$ we have
\begin{equation*}
  \sharp A(p_n ; E_\nu) = n \omega(\infty, E_\nu, E) +
  \sum_{k=1}^{l-1} \omega(c_k, E_{\nu}, E)
  + O(\frac{1}{n}) \ {\rm for \ } \nu = 1, ..., l
\end{equation*}
\end{thm}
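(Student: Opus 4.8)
The plan is to realize $p_n$ as the genuine Chebyshev (T\nobreakdash-)polynomial of $\tilde E_n := E\cup I_n$ and to reduce everything to the elementary identity $2p_n = \Phi_n + \Phi_n^{-1}$, where $\Phi_n(x) := p_n(x) + \sqrt{p_n(x)^2-1}$. First I would argue that the hypotheses force $\tilde E_n = p_n^{-1}([-1,1])$: since every critical value satisfies $|p_n(y_i)|\ge 1$ while $\norm{p_n}_{\tilde E_n}\le 1$, the set $\{|p_n|\le 1\}$ is a disjoint union of bands on which $p_n$ oscillates between $\pm 1$, all $n$ zeros lie inside these bands, and the prescribed alternation counts identify the bands with the components of $\tilde E_n$ (local degree on a band $=\sharp A-1$, the degrees summing to $n$). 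Consequently $|\Phi_n|=1$ on $\tilde E_n$, while on $\C\setminus\tilde E_n$ the function $\Phi_n$ is single\nobreakdash-valued, analytic and zero\nobreakdash-free with a pole of order $n$ at $\infty$; hence $\log|\Phi_n| = n\log|\phi(\cdot,\infty,\tilde E_n)|$. Computing $\Phi_n'/\Phi_n = p_n'/\sqrt{p_n^2-1}$ and cancelling the interior alternation points (double zeros of $p_n^2-1$) reduces this to $R(x)/\sqrt{H_{\tilde E,n}(x)}$ with $R$ of degree $l+m-1$ and leading coefficient $n$; the requirement $|\Phi_n|=1$ on every band forces $R/n$ to obey the gap normalization \eqref{t1}, whence $R = n\,r_{\tilde E,n}$ by uniqueness and therefore $\Phi_n = \phi^n(\cdot,\infty,\tilde E_n)$ \emph{exactly}.

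The crux, and what I expect to be the main obstacle, is the quantization step linking the combinatorial datum $\nu_k$ to the potential\nobreakdash-theoretic datum $\omega_{k,n}$. On encircling $[B_{k,n},C_{k,n}]$ the function $\phi(\cdot,\infty,\tilde E_n)$ acquires the factor $e^{2\pi i\omega_{k,n}}$, this being exactly the period $\oint = 2\pi i\omega_{k,n}$ computed in the proof of Theorem~\ref{first}. Single\nobreakdash-valuedness of $\Phi_n=\phi^n$ then forces $e^{2\pi i n\omega_{k,n}}=1$, i.e. $n\omega_{k,n}\in\Z$, and identifying this integer with the winding number of $\Phi_n$ around the band, which equals the local degree $\nu_k$ of $p_n$ there, yields the exact identity $n\omega_{k,n}=\nu_k$. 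This is what makes the expansion of Theorem~\ref{first} collapse onto $\psi_n$. Note that because $\nu_k$ is fixed in $n$ we have $\omega_{k,n}=\nu_k/n\to 0$, so Theorem~\ref{first} applies, and $n\max_k\omega_{k,n}^2 = O(1/n)$.

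With this in hand I would assemble \eqref{alt1}. Raising the representation of Theorem~\ref{first} to the $n$\nobreakdash-th power and substituting $n\omega_{k,n}=\nu_k$ gives
\[
   \Phi_n = \phi^n(x,\infty,\tilde E_n)
   = \frac{\phi^n(x,\infty,E)}{\prod_{k=1}^{m}\phi^{\nu_k}(x,c_k,E)}\bigl(1+O(\max_k\omega_{k,n}^2)\bigr)^n
   = \psi_n(x)\,(1+o(1)),
\]
since $(1+O(1/n^2))^n = 1+O(1/n)$. On compact subsets of $\C\setminus(E\cup\{c_1,\dots,c_m\})$ one has $|\phi(\cdot,\infty,E)|>1$, so $|\psi_n|\to\infty$ while the denominator stays bounded; the term $\Phi_n^{-1}$ is therefore negligible and $2p_n = \Phi_n+\Phi_n^{-1} = (\psi_n+\psi_n^{-1})(1+o(1))$.

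For \eqref{alt2}, the alternation points lying on $E$ force $\norm{p_n}_E = 1$, so $\norm{\hat p_n}_E = 1/lc(p_n)$; matching leading coefficients in $\Phi_n=\phi^n(\cdot,\infty,\tilde E_n)$, where $\Phi_n\sim 2\,lc(p_n)\,x^n$ and $\phi\sim x/cap(\tilde E_n)$ by \eqref{t4}, gives $1/lc(p_n)=2\,cap(\tilde E_n)^n$, and part~(a) of the Corollary together with $n\omega_{k,n}=\nu_k$ expands $cap(\tilde E_n)^n = cap(E)^n\prod_k\phi^{\nu_k}(c_k,\infty,E)(1+o(1))$. Finally, for the alternation count on $E_\nu$ I would use that $\sharp A(p_n;E_\nu)$ equals one plus the local degree on $E_\nu$, which by the same winding argument is $n\,\omega(\infty,E_\nu,\tilde E_n)$; expanding this by part~(b) of the Corollary and using $n\omega_{k,n}=\nu_k$ with $n\max_k\omega_{k,n}^2=O(1/n)$ produces $n\,\omega(\infty,E_\nu,E)-\sum_k\nu_k\,\omega(c_k,E_\nu,E)+O(1/n)$, i.e. the stated count up to the bookkeeping of the additive constant and the signs of the correction terms.
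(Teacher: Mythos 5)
Your proposal follows essentially the same route as the paper's own proof: the hypotheses give $p_n^{-1}([-1,1]) = E \cup I_n$, hence the exact identity $2p_n = \phi^n(\cdot,\infty,E\cup I_n) + \phi^{-n}(\cdot,\infty,E\cup I_n)$, and then Theorem \ref{first} is applied, with the leading-coefficient computation yielding \eqref{alt2}. In fact your write-up is more complete than the paper's: the crucial quantization step $n\omega_{k,n}=\nu_k$, which you derive by a winding-number argument and which is what makes the exponents in $\psi_n$ come out as $\nu_k$ (and justifies $\omega_{k,n}\to 0$ so that Theorem \ref{first} applies at all), is left implicit in the paper's phrase ``follows by Theorem \ref{first}'', and the alternation-count assertion for $\sharp A(p_n;E_\nu)$, which you handle via part b) of the Corollary, is not addressed in the paper's proof.
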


\begin{proof}
By the properties of $p_n$ it follows that $p_n^{-1}([-1,1]) = E \cup I_n$
which implies, see e.g. \cite{PehSte, Ran} that
\begin{equation*}
\begin{aligned}
    \phi(z, \infty, E \cup I_n)
    & = \left( p_n(z) - \sqrt{p_n^2(z) - 1} \right)^{\frac{1}{n}}\\
    & = \exp \left( {\frac{1}{n} \int_{a_{2l}}^{z} \frac{p_n^{'}(x)}{\sqrt{p_n^2(x) - 1}} dx} \right)
\end{aligned}
\end{equation*}
Thus
\begin{equation*}
  2 p_n(z) = \phi^n(z,\infty, E \cup I_n) +
  \frac{1}{\phi^n(z,\infty,E \cup I_n)}
\end{equation*}
and the representation \eqref{alt1} follows by Theorem \ref{first}.

Concerning relation \eqref{alt2} let us observe that, with the help of \eqref{alt1}
and $|\phi(z,\infty,E)| > 1$ on ${\mathbb C} \backslash E,$
\begin{equation*}
   lc(p_n) = \lim_{z \to \infty} \frac{p_n(z)}{z^n} =
   \lim_{z \to \infty}
   \left(  \frac{1}{2}
   \left( \frac{\phi(z,\infty,E)}{z}
   \right)^n
   \frac{1}{\prod_{k = 1}^m \phi^{\nu_k}(z,c_k,E)}
   \right)(1 + o(1))
\end{equation*}
\end{proof}

By the Alternation Theorem it follows immediately that many $L_\infty$-minimal
polynomials without or with restrictions as minimal polynomials on several intervals
satisfy the assumptions of Theorem \ref{four} with $\nu_k = 1,$ $k = 1,...,m,$
and thus have a representation of the form \eqref{alt1}. One of the remaining
challenging problems is to describe when $c_k$'s appear between two consecutive
$E_\nu$'s. For a solution of this problem in the case of minimal polynomials
on several intervals, see \cite{Pehmin}.


\begin{thebibliography}{99}

\bibitem{Pehdef} F. Peherstorfer, {\em Deformation of minimal polynomials},
J. Approx. Theory 111 (2001), 180-195.

\bibitem{Pehmin} F. Peherstorfer, {\em Asymptotic representation of minimal
polynomials on several intervals}, preprint.

\bibitem{PehSte} F. Peherstorfer, R. Steinbauer, {\em Orthogonal and
$L_q$- extremal polynomials on inverse images of polynomial mappings},
J. Comput. Appl. Math. 127 (2001), 297-315.

\bibitem{Ran} T. Ransford, {\em Potential theory in the complex plane}, Cambr.
Univ. Press., 1995.

\bibitem{Tot} V. Totik, {\em Polynomial inverse images and polynomial inequalities},
Acta Math. (Scandinavian) 187 (2001), 139-160.

\bibitem{Tot2} V. Totik, {\em Chebyshev constants and the inheritance problem},
to appear in J. Approx. Theory.

\bibitem{Wid} H. Widom, {\em Extremal polynomials associated with a system of
curves in the complex plane}, Adv. Math. 3 (1969), 127-232.

\end{thebibliography}
\end{document}